\documentclass[letterpaper, 11pt]{article}
\usepackage{amsmath,amssymb, amsthm, graphicx, setspace}
\usepackage[hmargin=1.5in,vmargin=1.5in]{geometry}

\newtheorem{thm}{Theorem}

\newtheorem{trig}{Trig. Identity}

\newtheorem{lemma}[thm]{Lemma}
\newtheorem*{thm*}{Theorem}

\theoremstyle{example}{

\newtheorem{remark}[thm]{Remark}

\newtheorem{?}{Question}
}

\DeclareMathOperator{\conv}{conv}

\newcommand{\bS}{\mathbb{S}}
\newcommand{\R}{\mathbb{R}}

\newcommand{\N}{\mathbb{N}}
\newcommand{\Z}{\mathbb{Z}}

\title{Edges of the Barvinok-Novik orbitope}
\author{Cynthia Vinzant \footnote{University of California, Berkeley, Department of Mathematics, cvinzant@math.berkeley.edu}}

\date{\today}
\begin{document}

\maketitle

\begin{abstract}
Here we study the $k$th symmetric trigonometric moment curve and its convex hull, the Barvinok-Novik orbitope. 
In 2008, Barvinok and Novik introduce these objects and show that there is some threshold so that for two points on $\bS^1$ with arclength below this threshold the line segment between their lifts to the curve form an edge on the Barvinok-Novik orbitope and for points with arclength above this threshold, their lifts do not form an edge. They also give a lower bound for this threshold and conjecture that this bound is tight. Results of Smilansky prove tightness for $k=2$. Here we prove this conjecture for all $k$. 
\end{abstract}

\section{The odd trigonometric and cosine moment curves}
Understanding the facial structure of the convex hull of curves is critical to the study of convex bodies, such as orbitopes and spectrahedron. 
It also reveals faces of polytopes formed by taking the convex hull of finitely many points on the curve. 
In 2008, Barvinok and Novik \cite{BN} use this technique to derive new asymptotic lower bounds for the maximal face numbers of centrally symmetric polytopes. To do this they study the symmetric trigonometric moment curve and the faces of its convex hull. 
Following \cite{BN}, let $SM_{2k}$ denote the symmetric trigonometric moment curve,
\[SM_{2k}(\theta) = (\cos(\theta), \cos(3\theta), \hdots, \cos((2k-1)\theta),\sin(\theta),\sin(3\theta),\hdots, \sin((2k-1)\theta)),\] 
and $B_{2k}$ its convex hull,
\[B_{2k} = \conv(SM_{2k}([0,2\pi])).\] 
Barvinok and Novik show that $B_{2k}$ is locally $k$-neighborly and use this to produce centrally symmetric polytopes with high faces numbers. 
The convex body $B_{2k}$ is also an \textit{orbitope}, that is, the convex hull of the orbit of a compact group (e.g. $\mathbb{S}^1$) acting linearly on a vector space, as studied in \cite[\S 5]{orb}. It is also remarked that the convex hull of the full trigonometric moment curve is the Hermitian Toeplitz spectrahedron, meaning that $B_{2k}$ is the projection of this Toeplitz spectrahedron \cite{orb}. For example, 
\[B_4 = \left\{ (x_1,x_3,y_1,y_3) \in \R^4\;:\; \exists \;z_2 \in \mathbb{C} \text{ with }\begin{bmatrix} 1&z_1&z_2&z_3 \\ \overline{z_1}&1&z_1&z_2\\\overline{z_2}&\overline{z_1}&1&z_1\\ \overline{z_3}&\overline{z_2}&\overline{z_1}&1 \end{bmatrix} \succeq 0\right\}\]
where $z_j = x_j+i y_j$ and ``$M \succeq 0$" denotes that the Hermitian matrix $M$ is positive semidefinite. Smilansky \cite{Smil} studies in depth the convex hulls of four-dimensional moment curves, such as $B_4$, and completely characterizes their facial structure. 

As an orbitope, the projection of a spectrahedron, and convex hull of a curve, the centrally symmetric convex body $B_{2k}$ is an interesting object in its own right, in addition to its ability to provide centrally symmetric polytopes with many faces. The theorem of this paper is a complete characterization of the edges of $B_{2k}$, which gives an affirmative answer to the first question of \cite[Section 7.4]{BN}. 
 
\begin{thm}
For $\alpha \neq \beta \in [0,2\pi]$, the line segment $[SM_{2k}(\alpha),SM_{2k}(\beta)]$ is 
\begin{center}
\begin{tabular}{ll}an exposed edge of $B_{2k}$& if $|\alpha - \beta| < 2\pi(k-1)/(2k-1)$, and\\
 not an edge of $B_{2k}$& if $|\alpha - \beta| > 2\pi(k-1)/(2k-1)$,\end{tabular}\end{center}
where $|\alpha-\beta|$ is the length of the arc between $e^{i\alpha}$ and $e^{i\beta}$ on $\bS^1$. \label{edge}\end{thm}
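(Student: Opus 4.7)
First, using the $\bS^1$-action on $B_{2k}$ (rotating $\theta$ acts linearly on $\R^{2k}$ by rotating coordinate pairs), I reduce to the case $\alpha=-t/2$, $\beta=t/2$ with $t=|\alpha-\beta|$. The segment $[SM_{2k}(-t/2),SM_{2k}(t/2)]$ is an exposed edge of $B_{2k}$ if and only if there exists a non-negative real trigonometric polynomial of the form
\[
p(\theta) \;=\; c + \sum_{j=1}^{k}\bigl[a_j\cos((2j-1)\theta) + b_j\sin((2j-1)\theta)\bigr]
\]
that vanishes exactly at $\theta=\pm t/2$; such $p$ furnishes the exposing affine functional. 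Averaging $p(\theta)$ with $p(-\theta)$ (which is another non-negative polynomial of the same form vanishing on the same set), I may take $p$ even, so all $b_j$ vanish.

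For $t < 2\pi(k-1)/(2k-1)$, I plan to construct such a $p$ via Fej\'er--Riesz: write $p(\theta)=|q(e^{i\theta})|^2$ for a real polynomial $q\in\R[z]$ of degree $2k-1$. Since $p$ has double zeros at $\pm t/2$, factor
\[
q(z) \;=\; \bigl(z^2 - 2\cos(t/2)\,z + 1\bigr)\,\tilde q(z), \qquad \tilde q\in\R[z],\ \deg\tilde q = 2k-3.
\]
The condition that $|q|^2$ contain only odd harmonics (plus a constant) translates into $k-1$ polynomial equations on the coefficients of $\tilde q$, namely the vanishing of the coefficients of $z^{\pm 2}, z^{\pm 4},\ldots,z^{\pm(2k-2)}$ in $|q(e^{i\theta})|^2$. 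The core technical step is to solve this system with $\tilde q$ having all roots strictly inside the open unit disk, ensuring $p>0$ on $[0,2\pi]\setminus\{\pm t/2\}$. I would track the roots of $\tilde q$ as a function of $t$: exhibit a solution explicitly at small $t$, then argue that the roots of $\tilde q$ remain in the open disk for $t<2\pi(k-1)/(2k-1)$ and first reach the unit circle exactly at the critical value, at which point $p$ acquires additional zeros.

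For $t > 2\pi(k-1)/(2k-1)$, I would show the segment is not a face by expressing the midpoint $M=\tfrac12(SM_{2k}(-t/2)+SM_{2k}(t/2))$ as a convex combination of curve points including at least one $SM_{2k}(\theta^*)$ with $\theta^*\notin\{\pm t/2\}$. Via the moment-map description of $B_{2k}$ as the image of probability measures on $\bS^1$, this amounts to producing a signed Borel measure $\nu$ on $[0,2\pi]$ of zero total mass whose moments $\int\cos((2j-1)\theta)\,d\nu$ and $\int\sin((2j-1)\theta)\,d\nu$ vanish for $j=1,\ldots,k$, supported on $\{\pm t/2\}\cup S$ for some $S\subset[0,2\pi]\setminus\{\pm t/2\}$, with positive mass at $\pm t/2$. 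The Jordan decomposition $\nu=\nu_+-\nu_-$ then yields two distinct probability measures with the same image in $B_{2k}$, certifying that $M$ lies in the relative interior of a face strictly containing the segment. The existence of such $\nu$ above the threshold should be dual (in the LP sense) to the failure of the preceding system for $\tilde q$ to admit admissible real solutions.

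The main obstacle is the sharpness of the threshold in the construction of $\tilde q$. I expect the hardest step is producing an explicit extremal configuration at $t=2\pi(k-1)/(2k-1)$, where $\tilde q$ acquires roots on the unit circle and $p$ picks up additional real zeros, so that the exposed face jumps in dimension precisely there; coupled with a continuity/monodromy argument to propagate solvability back to small $t$. The specific value $2\pi(k-1)/(2k-1)$ should emerge from the dimension count (roughly $2k-3$ coefficients of $\tilde q$ against $k-1$ parity equations together with the open-disk condition) and from a Chebyshev-like extremal property of non-negative trigonometric polynomials supported on odd frequencies up to $2k-1$.
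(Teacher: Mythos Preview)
Your outline has genuine gaps at precisely the points that carry the content of the theorem. For the edge direction you set up a Fej\'er--Riesz factorization $p=|q|^2$ and the parity constraints on $|q|^2$, but you never solve the resulting system nor show that the root locus of $\tilde q$ stays inside the open disk exactly on $t<2\pi(k-1)/(2k-1)$; you say the threshold ``should emerge'' from a dimension count, but a count of $2k-3$ unknowns against $k-1$ equations gives neither existence nor the specific value, and the open-disk condition is nonlinear, so this is the whole problem rather than a routine step. (In any case this direction was already established by Barvinok and Novik, so the paper simply cites it.) For the non-edge direction, your proposed certificate is a signed measure $\nu$ with vanishing odd moments and the right support, and you assert its existence ``should be dual (in the LP sense)'' to infeasibility of the $\tilde q$ system. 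But feasibility of ``$\tilde q$ with all roots in the open disk and prescribed parity of $|q|^2$'' is not a linear program, so LP duality does not yield $\nu$; without an explicit construction of $\nu$ (or equivalently an explicit convex combination representing the midpoint), nothing here pins down the threshold.

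The paper's argument for the non-edge direction is quite different and fully concrete. It projects to the cosine part $C_k(\theta)=(\cos\theta,\cos 3\theta,\ldots,\cos(2k-1)\theta)$, observes that the midpoint of $[SM_{2k}(-\theta),SM_{2k}(\theta)]$ is $(C_k(\theta),0)$, and shows that for $\theta$ just past $\frac{(k-1)\pi}{2k-1}$ the point $C_k(\theta)$ lies in the interior of $\conv(C_k)$. This is done by looking at the facet $\{x_k=1\}$ of $\conv(C_k)$: the curve meets it at the $k$ points $\theta=\frac{2j\pi}{2k-1}$, the projection of $C_k$ onto that hyperplane is $C_{k-1}$, and one checks via explicit trigonometric identities that $C_{k-1}(\theta)$ enters the interior of an auxiliary simplex $Q_k\subset P_k$ (with a completely explicit facet description) transversely at $\theta=\frac{(k-1)\pi}{2k-1}$ (or its reflection when $k$ is even). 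A short ``curve dips behind a facet'' lemma then converts this into $C_k(\theta)\in\intr\conv(C_k)$. What this buys over your plan is that the threshold appears for a transparent reason---it is a vertex of the facet $\{x_k=1\}$---and every step is a finite computation with Chebyshev-type sums, rather than a continuity/monodromy argument on an implicitly defined root locus.
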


Our contribution is to prove the second case, when $[SM_{2k}(\alpha),SM_{2k}(\beta)]$ is not an edge. The existence of exposed edges is given by the following:
\begin{thm}[{\cite[Theorem 1.1]{BN}}] For all $k \in \Z_{>0}$, there exists $\frac{2\pi(k-1)}{2k-1}\leq\psi_k~\leq~\pi$ so that for all $\alpha \neq \beta \in [0,2\pi]$, the line segment $[SM_{2k}(\alpha),SM_{2k}(\beta)]$ is an exposed edge of $B_{2k}$ if $|\alpha - \beta| < \psi_k$ and not an edge of $B_{2k}$ if $|\alpha - \beta| > \psi_k$.\end{thm}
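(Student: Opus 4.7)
The plan is to rotate to the symmetric case, reduce via antipodal pairing to a convex hull question about the cosine moment curve, and then use convex duality to recast the obstruction as the non-existence of a certain non-negative polynomial, which I expect to pin down via a Chebyshev extremal argument.

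\medskip

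\noindent\textbf{Reduction to the cosine moment curve.} By the $\bS^1$-equivariance of $SM_{2k}$ I will rotate so that $\alpha = \gamma$ and $\beta = -\gamma$ with $\gamma \in (\pi(k-1)/(2k-1),\, \pi/2]$. The midpoint of the segment is then
\[
M := \tfrac{1}{2}(SM_{2k}(\gamma) + SM_{2k}(-\gamma)) = (\cos\gamma, \cos 3\gamma, \ldots, \cos((2k-1)\gamma),\, 0, \ldots, 0).
\]
To prove the segment is not an edge, it suffices to exhibit $M$ as a convex combination $\sum_i c_i \, SM_{2k}(\theta_i)$ with every $\theta_i \notin \{\pm\gamma\}$. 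Taking the $\theta_i$'s in antipodal pairs $\pm\theta_i$ with equal weights automatically cancels the sine components, so the task reduces to writing $\phi_k(x_0) := (T_1(x_0), T_3(x_0), \ldots, T_{2k-1}(x_0))$---where $T_n$ is the $n$-th Chebyshev polynomial, $x_0 := \cos\gamma \in [0, \rho)$, and $\rho := \cos(\pi(k-1)/(2k-1))$---as a nontrivial convex combination of $\phi_k(x_i)$'s with $x_i \in [-1,1] \setminus \{x_0\}$.

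\medskip

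\noindent\textbf{Dual reformulation.} Such a combination exists if and only if $\phi_k(x_0)$ is not an extreme (equivalently exposed, since $C_k := \conv(\phi_k([-1,1]))$ is the convex hull of a real-analytic curve) point of $C_k$. By convex duality this is in turn equivalent to the \emph{non}-existence of a polynomial
\[
R(x) = c_0 + \sum_{j=1}^{k} c_j T_{2j-1}(x)
\]
---a constant plus an odd polynomial of degree at most $2k-1$---which is non-negative on $[-1, 1]$ and has $x_0$ as its unique zero in $[-1, 1]$; indeed, any such $R$ would force every positive measure with the same $T_{2j-1}$-moments as $\delta_{x_0}$ to be supported at $x_0$, while its absence yields another measure and hence the desired combination. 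The proof therefore reduces to showing that no such polynomial $R$ exists when $|x_0| < \rho$.

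\medskip

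\noindent\textbf{Polynomial analysis.} Suppose $R$ is of the prescribed form with $R(x_0) = 0$. Since $R \geq 0$ on $[-1,1]$ and $x_0 \in (-1, 1)$, $R$ has a double zero there, so $R(x) = (x - x_0)^2 Q(x)$ with $\deg Q \leq 2k-3$. The constant-plus-odd structure, equivalently $R(x) + R(-x) \equiv 2c_0$, combined with this factorization yields (after setting $y = x^2$ and decomposing $Q(x) = Q_e(x^2) + x Q_o(x^2)$) the polynomial identity
\[
(y + x_0^2)\, Q_e(y) \;-\; 2 x_0 y\, Q_o(y) \;=\; c_0,
\]
which pins the admissible $Q$'s to a $(k{-}1)$-parameter affine family once $c_0$ is fixed. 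I then aim to show no member of this family is non-negative on $[-1, 1]$ when $|x_0| < \rho$. The governing extremal polynomial at the threshold is $R(x) = 1 - (-1)^{k-1} T_{2k-1}(x)$ (i.e.\ $1 - T_{2k-1}$ for $k$ odd and $1 + T_{2k-1}$ for $k$ even): by the equioscillation of $T_{2k-1}$ this $R$ satisfies the constant-plus-odd structure, has a double zero at $\rho$, double zeros at every other interior extremum of $T_{2k-1}$ on the same level, and a simple zero at one endpoint of $[-1, 1]$, exhausting all $2k-1$ zeros with multiplicity. Any perturbation moving the double zero from $\rho$ inward to some $x_0 < \rho$ must drag at least one of the other interior zeros deeper into $(-1, 1)$, violating the uniqueness requirement.

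\medskip

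\noindent\textbf{Main obstacle.} The hard part is establishing this extremality rigorously for all $k$. For $k = 2$ the family collapses to the single one-parameter cubic $R(x) = c(x - x_0)^2(x + 2x_0)$, whose third root $-2x_0$ enters $[-1, 1]$ exactly when $|x_0| \leq 1/2 = \rho$, recovering Smilansky's calculation. For general $k$ the sharp threshold should follow from a Chebyshev/Markov-type characterization of $1 \mp T_{2k-1}$ as the extremal (up to positive scale) member of the $(k{-}1)$-parameter family on the boundary of the positivity cone, turning control of how the ``extra'' zeros of $R$ migrate as $x_0$ varies into the desired precise threshold $\rho$.
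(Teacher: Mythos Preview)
The paper does not prove this statement at all: it is quoted verbatim from \cite[Theorem~1.1]{BN} and used as a black box. Its substantive content is (i) the existence of a single threshold $\psi_k$, and (ii) the lower bound $\psi_k \ge \frac{2\pi(k-1)}{2k-1}$, i.e.\ that for arclength \emph{below} $\frac{2\pi(k-1)}{2k-1}$ the segment $[SM_{2k}(\alpha),SM_{2k}(\beta)]$ \emph{is} an exposed edge.

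Your proposal addresses neither (i) nor (ii). You take $\gamma\in\bigl(\frac{(k-1)\pi}{2k-1},\frac{\pi}{2}\bigr]$, so that the arclength $2\gamma$ lies \emph{above} the conjectured threshold, and attempt to show the segment is \emph{not} an edge. That is exactly the paper's own contribution (Theorem~\ref{edge}), not the cited Barvinok--Novik result you were asked to prove. A proof of the quoted statement would instead require constructing, for each arclength below $\frac{2\pi(k-1)}{2k-1}$, an affine functional supporting $B_{2k}$ and meeting the curve only at the two chosen points---the opposite task from the one you set up.

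Even viewed as an attempt at Theorem~\ref{edge}, the argument is incomplete. The reduction to ``no constant-plus-odd polynomial $R\ge 0$ on $[-1,1]$ with a unique zero at $x_0$'' is a reasonable dual reformulation, but your ``Main obstacle'' paragraph is avowedly only a sketch: the assertion that moving the double zero of $1\mp T_{2k-1}$ inward from $\rho$ forces another zero into $[-1,1]$ is precisely the crux, and saying it ``should follow from a Chebyshev/Markov-type characterization'' is not a proof. (There is also a small gap earlier: ``no such $R$ exists'' gives \emph{not exposed}, which does not automatically yield \emph{not extreme} without further justification.) By contrast, the paper's proof of Theorem~\ref{edge} avoids any extremal-polynomial analysis: it projects to the cosine curve $C_k$, identifies the explicit simplicial facet $P_k$ of $\conv(C_k)$ on $\{x_k=1\}$, and shows via a concrete inscribed simplex $Q_k$ and finitely many trigonometric identities that the curve enters the interior just past $\theta=\frac{(k-1)\pi}{2k-1}$.
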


To prove Theorem \ref{edge}, it suffices to show that for arbitrarily small $\epsilon >0$ and $|\alpha - \beta| = 2\pi(k-1)/(2k-1)+\epsilon$,  the line segment $[SM_{2k}(\alpha),SM_{2k}(\beta)]$ is not an edge of $B_{2k}$. By the $\mathbb{S}^1$ action on $B_{2k}$, $[SM_{2k}(\alpha),SM_{2k}(\beta)]$ is an edge of $B_{2k}$ if and only if $[SM_{2k}(\alpha+\tau),SM_{2k}(\beta+\tau)]$ is an edge for all $\tau \in [0,2\pi]$. Thus is it suffices to show that $[SM_{2k}(-\theta),SM_{2k}(\theta)]$ is not an edge of $B_{2k}$ for $\theta = \pi(k-1)/(2k-1)+\epsilon/2$.

To study $SM_{2k}$ we will look at the projection onto its ``cosine components". Let \[C_k(\theta) = (\;\cos(\theta), \cos(3\theta),  \hdots, \cos((2k-1)\theta)\;) \;\subset\; \mathbb{R}^k.\]
By (\ref{eq:midpoint}) below, $C_k$ is the curve of midpoints of the line segments $[SM_{2k}(-\theta), SM_{2k}(\theta)]$. 

\begin{lemma} If $C_k(\theta)$ lies in the interior of $\conv(C_k)$, then $[SM_{2k}(-\theta),SM_{2k}(\theta)]$ is \textit{not} an edge of $B_{2k}$. \label{cos}
\end{lemma}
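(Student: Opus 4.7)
The plan is to exploit the midpoint identity
\[
(C_k(\alpha),0,\ldots,0) \;=\; \tfrac{1}{2}\bigl(SM_{2k}(\alpha)+SM_{2k}(-\alpha)\bigr),
\]
which is the content of (\ref{eq:midpoint}), to transport a nontrivial convex decomposition of $C_k(\theta)$ inside $\conv(C_k)$ up to a convex decomposition of the midpoint $m = \tfrac12\bigl(SM_{2k}(-\theta)+SM_{2k}(\theta)\bigr)$ inside $B_{2k}$. If the lifted decomposition involves at least one point of the curve $SM_{2k}$ that is not on the segment $[SM_{2k}(-\theta),SM_{2k}(\theta)]$, then $m$ witnesses the failure of this segment to be a face of $B_{2k}$, and in particular it cannot be an edge.

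To produce the decomposition: since $C_k(\theta) \in \intr(\conv(C_k))$ it is not an extreme point, so one may write $C_k(\theta)=\tfrac12(u+v)$ with $u,v \in \conv(C_k)$ and $u \neq C_k(\theta)$. Expanding $u$ and $v$ as convex combinations of curve points and combining, one obtains
\[
C_k(\theta) \;=\; \sum_i \lambda_i \, C_k(\theta_i), \qquad \lambda_i > 0, \quad \sum_i \lambda_i = 1,
\]
in which at least one index $i$ has $\theta_i \notin \{\theta,-\theta\}$; indeed, if every $\theta_i$ belonged to $\{\theta,-\theta\}$ then by $C_k(-\alpha)=C_k(\alpha)$ one would have $u = C_k(\theta)$, contradicting the choice of $u$. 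Applying the midpoint identity term-by-term then yields
\[
m \;=\; \sum_i \tfrac{\lambda_i}{2}\bigl(SM_{2k}(\theta_i)+SM_{2k}(-\theta_i)\bigr),
\]
a convex combination of points on $SM_{2k}([0,2\pi]) \subset B_{2k}$.

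The substantive remaining step is to verify that the only points of $SM_{2k}([0,2\pi])$ lying on the segment $[SM_{2k}(-\theta),SM_{2k}(\theta)]$ are its endpoints $SM_{2k}(\pm\theta)$; we may assume $\theta \in (0,\pi)$, since otherwise the segment degenerates to a point and there is nothing to prove. If $SM_{2k}(\gamma) = t\,SM_{2k}(\theta)+(1-t)\,SM_{2k}(-\theta)$ for some $t \in [0,1]$, the cosine components force $C_k(\gamma)=C_k(\theta)$; in particular $\cos(\gamma)=\cos(\theta)$, so $\gamma \equiv \pm\theta \pmod{2\pi}$. Comparing the leading sine components and using $\sin(\theta)\neq 0$ then pins down $t \in \{0,1\}$. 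Combined with the previous paragraph, the term $SM_{2k}(\theta_i)$ with $\theta_i \notin \{\theta,-\theta\}$ is a point of $B_{2k}$ lying strictly off the segment, so the segment cannot be a face of $B_{2k}$, and hence is not an edge. The main obstacle is really this last step: one must use both the evenness of the cosine components and the nonvanishing of $\sin(\theta)$ to rule out transverse intersections of the chord with the curve.
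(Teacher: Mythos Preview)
Your argument is correct, but it takes a different route from the paper's. Both proofs hinge on the midpoint identity $(C_k(\theta),\overline{0})=\tfrac12 SM_{2k}(-\theta)+\tfrac12 SM_{2k}(\theta)$, but they use it differently. The paper slices $B_{2k}$ by the linear subspace $L=\{x_{k+1}=\cdots=x_{2k}=0\}$, observes that $\conv(C_k)\times\{\overline{0}\}\subseteq L\cap B_{2k}$ is full-dimensional in $L$, and that $L$ meets $\intr(B_{2k})$ (since $0\in L$). The general fact that for an affine subspace meeting the interior of a convex body one has $\relint(L\cap B_{2k})=L\cap\intr(B_{2k})$ then places the midpoint $m$ directly in $\intr(B_{2k})$, so the segment cannot be an edge. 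This is a one-line geometric reduction that actually yields the stronger conclusion $m\in\intr(B_{2k})$, and it avoids any analysis of how the curve $SM_{2k}$ meets the chord.

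Your approach instead lifts an explicit nontrivial convex decomposition of $C_k(\theta)$ term-by-term to a decomposition of $m$ and then invokes the face characterization (a point of a face cannot be a positive combination involving points outside the face). This is perfectly valid and more elementary in spirit, but it buys you the extra obligation of showing that $SM_{2k}([0,2\pi])$ meets the chord only at its endpoints, which you handle correctly using $\cos\gamma=\cos\theta\Rightarrow\gamma\equiv\pm\theta$ together with $\sin\theta\neq 0$. The paper's slicing argument sidesteps this entirely; your argument is slightly longer but has the virtue of being self-contained without invoking the $\relint(L\cap K)=L\cap\intr(K)$ fact.
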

\begin{proof}
Let  $L = \{x\in \R^{2k}\;:\; x_{k+1}=\hdots=x_{2k}=0\}$. Note that for all $\theta \in [0,2\pi]$, $L\cap B_{2k}$ contains the point 
\begin{equation}(C_k(\theta),\overline{0})   \;= \;\frac{1}{2}SM_{2k}(-\theta)+\frac{1}{2}SM_{2k}(\theta), \label{eq:midpoint}\end{equation} and the convex hull of these points is full-dimensional in $L$. As $L$ contains the point $(0,\hdots,0)$, it intersects the interior of $B_{2k}$. Thus the relative interior of $B_{2k}\cap L$ and the intersection of $L$ with the interior of $B_{2k}$ coincide.

By assumption, $C_k(\theta)$ lies in the interior of $\conv(C_k)$, meaning that the point $\frac{1}{2}SM_{2k}(-\theta)+\frac{1}{2}SM_{2k}(\theta)$ lies in the relative interior of $L \cap B_{2k}$. Thus the line segment $[SM_{2k}(-\theta), SM_{2k}(\theta)]$ intersects the interior of $B_{2k}$ and it cannot be an edge. \end{proof}

To prove Theorem \ref{edge}, it now suffices to show that for small enough $\epsilon>0$, $C_k( \frac{k-1}{2k-1}\pi +\epsilon)$ lies in the interior of $\conv(C_k)$. It will be worth noting that $\cos( d \theta)$ is a polynomial of degree $d$ in $\cos(\theta)$, called the $d$th Chebyshev polynomial \cite{R}. Thus $C_k$ is a segment of an algebraic curve of degree $2k-1$, parametrized by the Chebyshev polynomials of odd degree evaluated in $[-1,1]$. 

\section{Curves dipping behind facets}

Here we give a criterion for a curve $C$ to dip inside of its convex hull after meeting a facet of $\conv(C)$.
Let $C(t) = (C^1(t), \hdots, C^n(t))$,  $t \in [-1,1]$ be a curve in $\R^n$ where $C^i \in \R[t]$. 
Let $F$ be a facet of $\conv(C)$ with supporting hyperplane $\{h^Tx = h_0\}$.  Suppose $C(t_0)$ is a vertex of $F$ with $t_0\in (-1,1)$ and that $C$ is smooth this point (\textit{i.e.} $C'(t_0) \neq \overline{0}$). Let $\pi_F$ denote the projection of $\R^n$ on to the affine span of $F$. See Figure~\ref{faceproj} for an example.

\begin{lemma} If $\pi_F(C(t_0+\epsilon))$ lies in the relative interior of $F$ for small enough $\epsilon >0$ and  any facet of $F$ containing $C(t_0)$ meets the curve $\pi_F(C)$ transversely at this point, then $C(t_0+\epsilon)$ lies in the interior of $\conv(C)$. \label{face}
 \end{lemma}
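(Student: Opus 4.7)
The plan is to decompose $C(t_0+\epsilon)$ relative to $\mathrm{aff}(F)$ and show that the perpendicular offset is too small, for $\epsilon>0$ sufficiently small, to reach the boundary of $\conv(C)$. Since $h^TC(t)\le h_0$ with equality at the interior point $t_0$, the polynomial $s(t):=h_0-h^TC(t)$ has a local minimum at $t_0$, so $s'(t_0)=0$ and hence $s(t_0+\epsilon)=O(\epsilon^2)$; moreover $v:=C'(t_0)$ satisfies $h^Tv=0$, so $v$ is unchanged by $\pi_F$ and is the tangent direction of $\pi_F(C)$ at $C(t_0)$. The orthogonal identity
\[
C(t_0+\epsilon)\;=\;\pi_F(C(t_0+\epsilon))\;-\;\frac{s(t_0+\epsilon)}{\|h\|^{2}}\,h
\]
exhibits $C(t_0+\epsilon)$ as a point of $\relint(F)$ displaced by $O(\epsilon^2)$ in the inward-normal direction $-h$.

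Because $\pi_F(C(t_0+\epsilon))$ lies in $\relint(F)$, the only supporting hyperplane of $\conv(C)$ through it is $\{h^Tx=h_0\}$, so a short movement in direction $-h$ lands in $\intr(\conv(C))$. Let $D(\epsilon)$ be the maximal such movement: it suffices to prove $s(t_0+\epsilon)/\|h\|<D(\epsilon)$ for small $\epsilon>0$. The depth $D(\epsilon)$ is constrained only by facets $F'$ of $\conv(C)$ through $C(t_0)$ whose outward normal $h'$ satisfies $h'^Th<0$, and is bounded below by $D_{F'}(\epsilon):=(h'_0-h'^T\pi_F(C(t_0+\epsilon)))\|h\|/|h'^Th|$. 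Taylor expansion gives $h'^T\pi_F(C(t_0+\epsilon))=h'_0+\epsilon\,h'^Tv+O(\epsilon^2)$. The transversality hypothesis forces $h'^Tv<0$ strictly: when $F\cap F'$ is a facet of $F$ through $C(t_0)$, its tangent space at $C(t_0)$ is $\ker h\cap\ker h'$, and the assumption $v\notin\ker h\cap\ker h'$ combined with $h^Tv=0$ yields $h'^Tv\ne 0$; the sign is negative since $h'^TC(t)\le h'_0$ with equality at $t_0$. Hence $D_{F'}(\epsilon)=\Theta(\epsilon)$, which dominates $s(t_0+\epsilon)/\|h\|=O(\epsilon^2)$ for small $\epsilon>0$, placing $C(t_0+\epsilon)$ in $\intr(\conv(C))$.

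The main obstacle is handling facets $F'$ of $\conv(C)$ through $C(t_0)$ for which $F\cap F'$ is smaller than a facet of $F$, so the transversality assumption does not apply to $F'$ directly. Transversality still propagates, because every proper face of $F$ through $C(t_0)$ is contained in a facet of $F$ through $C(t_0)$; in the residual degenerate case where $h'^Tv=0$, both $s(t_0+\epsilon)$ and $h'_0-h'^T\pi_F(C(t_0+\epsilon))$ are $\Theta(\epsilon^2)$, and a direct second-order comparison (using $h^TC''(t_0)<0$, $h'^TC''(t_0)<0$, and $h'^Th<0$) still gives $D_{F'}(\epsilon)>s(t_0+\epsilon)/\|h\|$. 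Because $\conv(C)$ need not be polyhedral, I would package the argument as a contradiction: supposing $C(t_0+\epsilon_i)\in\partial\conv(C)$ along some $\epsilon_i\to 0^+$, extracting a limit $n_\infty$ of supporting-hyperplane normals, deriving $n_\infty^Tv=0$ from $n_i^TC(t_0)\le n_i^TC(t_0+\epsilon_i)$ and the Taylor expansion of $C$, and then decomposing $n_\infty$ against the normal cone at the vertex $C(t_0)$ to produce a tangent direction contradicting the assumed transversality of $\pi_F(C)$ with some facet of $F$ through $C(t_0)$.
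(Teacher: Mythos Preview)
Your second paragraph contains a real error. For a facet $F'$ of $\conv(C)$ through $C(t_0)$ with outward normal $h'$, you argue that transversality gives $h'^Tv\ne 0$ and then that ``the sign is negative since $h'^TC(t)\le h'_0$ with equality at $t_0$.'' But that last condition says $t\mapsto h'^TC(t)$ has a maximum at the \emph{interior} parameter $t_0$, which forces $h'^TC'(t_0)=h'^Tv=0$, not $<0$. Indeed, by the very argument you used for $h$, \emph{every} supporting hyperplane of $\conv(C)$ at $C(t_0)$ has normal orthogonal to $v$. So your first-order estimate $D_{F'}(\epsilon)=\Theta(\epsilon)$ never holds; the numerator is always $O(\epsilon^2)$ and you are comparing two second-order quantities. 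Your third paragraph treats $h'^Tv=0$ as a residual degenerate case, but it is the \emph{only} case, and the closing compactness sketch does not close the gap: the conclusion $n_\infty^Tv=0$ is satisfied by every vector in the normal cone of $\conv(C)$ at $C(t_0)$, so it singles out nothing, and there is no mechanism tying $n_\infty$ to the facet normals of $F$ (which live inside $\mathrm{aff}(F)$ and are not elements of that normal cone). The transversality hypothesis concerns facets of the polytope $F$, not intersections $F\cap F'$ with facets of the non-polyhedral body $\conv(C)$.

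The paper sidesteps all of this with one move: pick any $p\in C\setminus F$ and show instead that $C(t_0+\epsilon)$ lies in the interior of the \emph{pyramid} $\conv(F\cup\{p\})\subset\conv(C)$. This is a polytope whose non-$F$ facets are precisely the cones $\conv(G\cup\{p\})$ over the facets $G$ of $F$; their defining inequalities $a_i^Tx\le b_i$ are tight at $p$, hence are \emph{not} valid on all of $C$, so there is no local-maximum obstruction and $a_i^Tv$ can be---and by the transversality hypothesis is---nonzero whenever $a_i^TC(t_0)=b_i$. The first-order comparison you wanted then goes through directly, with no second-order or compactness repairs needed.
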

\begin{proof}
Let $p$ be a point on $C \backslash F$. Then $\conv(F  \cup p)$ is a pyramid over the facet $F$. We will show that $C(t_0+\epsilon)$ lies in the interior of this polytope. Suppose $\{h^Tx\leq h_0,\;a_i^Tx\leq b_i, \;i=1,\hdots, s\}$ is a minimal facet description of $\conv(F  \cup p)$ with $a_i\in \R^n$, $b_i\in \R$. 
Then $a_i^T x<b_i$ for all $x$ in the relative interior of $F$.  

The polynomial $h_0-h^TC(t)\in \R[t]$ is non-negative for all $t\in [-1,1]$. As this polynomial is non-zero, it has only finitely many roots. Thus, for small enough $\epsilon >0$, $h^TC(t_0+\epsilon)<h_0$. 

Now we show that $a_i^TC(t_0+\epsilon)<b_i$. As $h_0-h^TC(t)$ is non-negative and zero at $t_0\in (-1,1)$, it must have a double root at $t_0$. This implies that $h^TC'(t_0)=0$, and thus, for any $\epsilon$, the point $C(t_0)+\epsilon C'(t_0)$ lies in the affine span of $F$. 
As $C(t_0)$ and $C(t_0)+\epsilon C'(t_0)$ both lie in the affine span of $F$, we have that 
\begin{align}
a_i^TC(t_0+\epsilon)\; &=\;a_i^TC(t_0)+\epsilon a_i^TC'(t_0) + O(\epsilon ^2),  \;\;\text{ and } \label{eq:curve1}\\
a_i^T\pi_F(C(t_0+\epsilon))\; &=\;a_i^TC(t_0)+\epsilon a_i^TC'(t_0) + O(\epsilon ^2).\label{eq:curve2} \end{align}
Our transversality assumption implies that, for each $i=1, \hdots s$, if $a_i^TC(t_0)=b_i$ then $a_i^T\pi_F(C'(t_0))=a_i^TC'(t_0)\neq 0$.  
Then for small enough $\epsilon>0$, $a_i^TC(t_0)+\epsilon a_i^TC'(t_0)$ is non-zero. As $\pi_F(C(t_0+\epsilon))$ lies in the relative interior of $F$,  $a_i^T\pi_FC(t_0+\epsilon)<b_i$. 
By (\ref{eq:curve2}), this implies that $a_i^TC(t_0)+\epsilon a_i^TC'(t_0) <b_i$. It then follows from (\ref{eq:curve1}) that $a_i^TC(t_0+\epsilon)<b_i$. 

This shows that $C(t_0 +\epsilon)$ lies in the interior of $\conv(F \cup p) \subset \conv(C)$. 
\end{proof}

\begin{figure}
\begin{center}
\includegraphics[scale=0.5, trim=50 470 250 120]{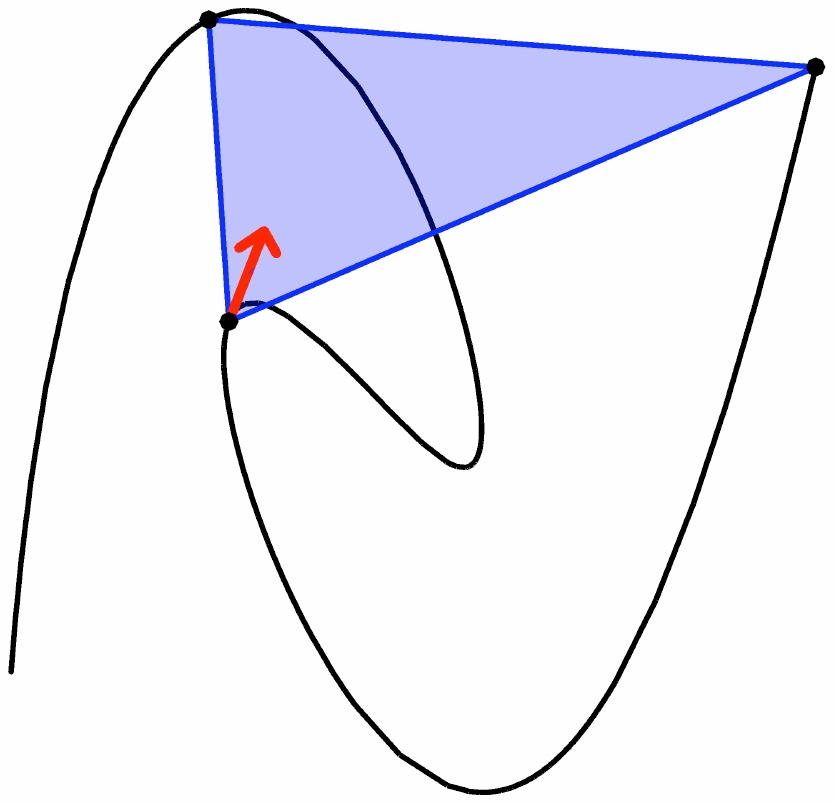}$\;\;\; \begin{picture}(35,90) \put(5,70){\vector(1,0){30}} \put(13,80){$\pi_F$} \end{picture} \;\;\;$ \includegraphics[scale=0.45, trim=50 470 250 120]{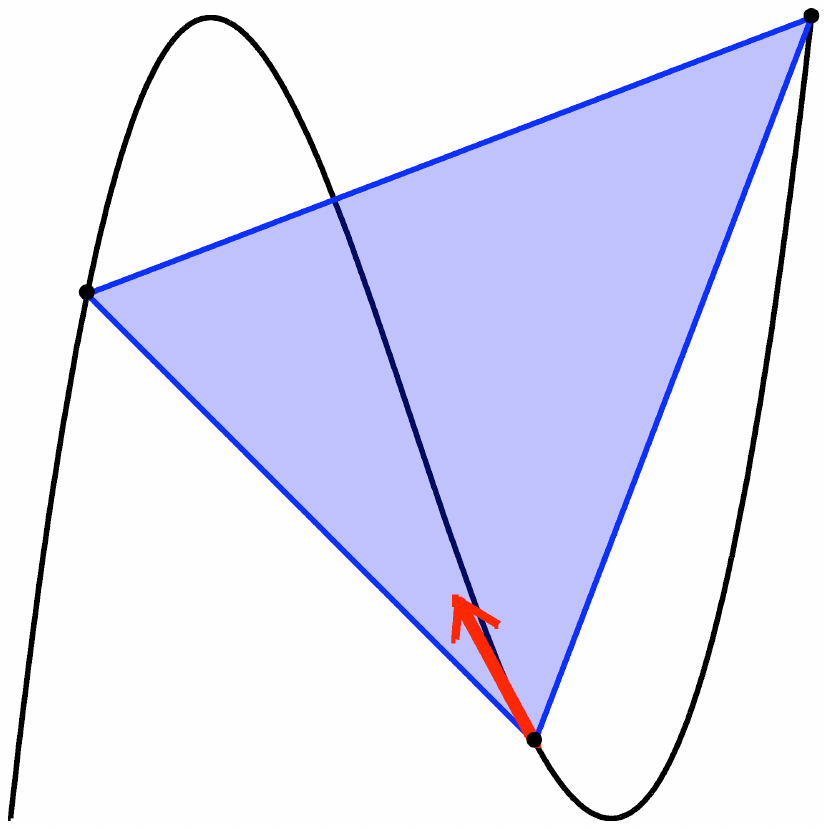}\end{center} \caption{Projection of the curve $C_3$ onto the facet $\{x_3=1\}$ of its convex hull. The tangent vector $C_3(t_0)+C_3'(t_0)$ for $t_0=2\pi/5$ is shown in red.}\label{faceproj}\end{figure}

\begin{remark}\label{LP} The hypotheses of Lemma~\ref{face} are equivalent to the condition that for small $\epsilon>0$, $C(t_0)+\epsilon C'(t_0)$ lies in the relative interior of $F$, or rather, that the vector $C'(t_0)$ lies in the relative interior of the tangent cone of $F$ at $C(t_0)$.  Given $F$, $C(t_0)$, and $C'(t_0)$, checking this condition is a linear program. 
\end{remark}


\section{Understanding the facet $\{x_k=1\}$}

We will show that the hypotheses of Lemma~\ref{face} are satisfied using the curve $C=C_k$, facet $F = \{x_k=1\}\cap\conv(C_k)$, and point $C(t_0)=C_k(\frac{k-1}{2k-1}\pi )$. To do this, we have to understand this facet and the projection of $C_k$ onto the hyperplane $\{x_k=1\}$. 

Note that the intersection of $C_k$ with the hyperplane $\{x_k=1\}$ is $k$ points given by solutions to $\cos((2k-1)\theta)=1$ in $[0,\pi]$, namely $\{C_k( \frac{2j}{2k-1}\pi) \;:\; j=0, \hdots, k-1\}$. The projection of $C_k$ onto this hyperplane is just $(C_{k-1},1)$.  Thus to understand the projection of $C_k$ onto this facet, we need to look at the points $\{ C_{k-1}( \frac{2j}{2k-1}\pi) \;:\; j=0, \hdots, k-1\}$. 
Let
\[\theta_0 = \frac{\pi}{2} \;\;\;\;\;\;\; \text{   and   }\;\;\;\;\;\;\; \theta_j = \frac{2j}{2k-1}\pi\;\;\;\;\;\; \text{  for   }j=1, \hdots, k-1.\]
Define the following two polytopes (simplices) in $\R^{k-1}$:
\begin{align*}
P_k &= \conv(\{C_{k-1}(0 \pi)\}\cup\;\{C_{k-1}( \theta_j ) \;:\; j=1, \hdots, k-1\})\\ &\\
Q_k &=  \conv(\{C_{k-1}( \theta_j ) \;:\; j=0, \hdots, k-1\}).\end{align*}

\begin{figure}\begin{center}\includegraphics[scale=0.6, trim=0 20 0 50]{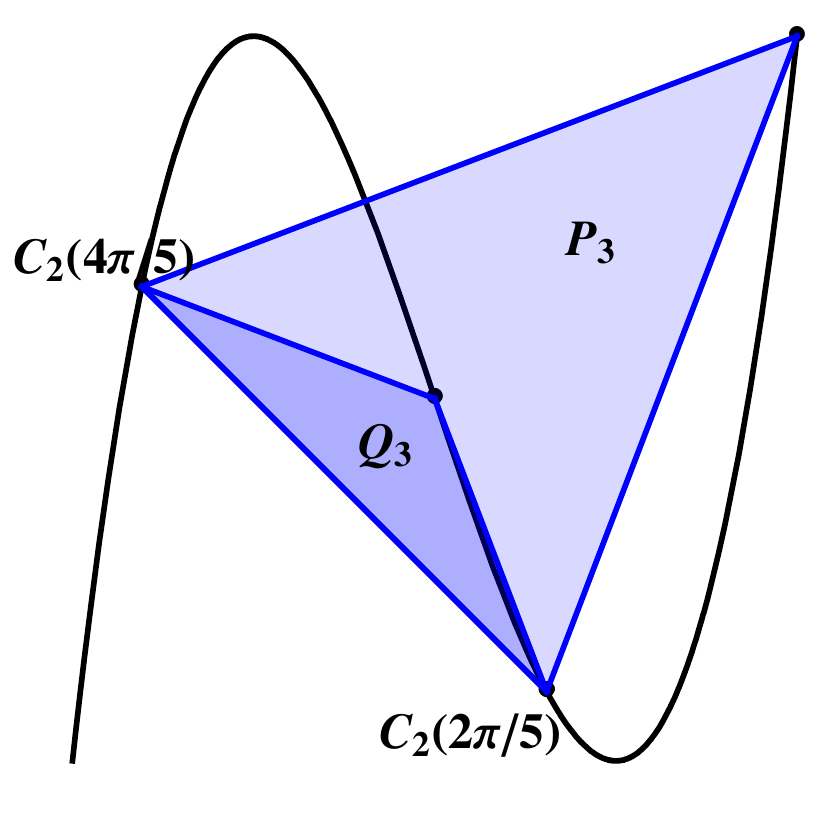}$\;\;\;\;\;\;\;\;\;\;\;\;\;\;\;$ \includegraphics[scale=0.6,trim=10 70 50 60]{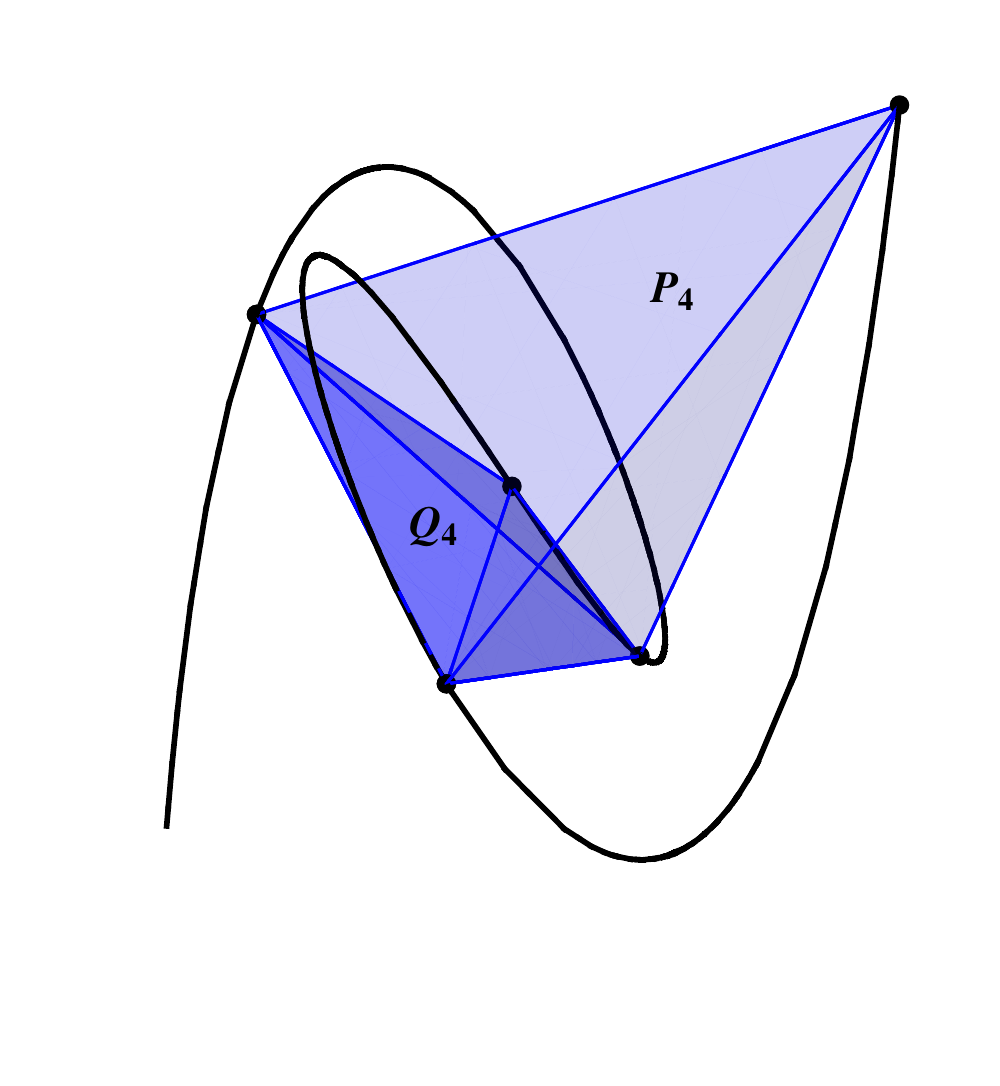} \end{center}\caption{ On the left, the curve $C_2$ (the projection of $C_3$ onto the plane $\{x_3=1\}$) with the triangles $P_3$ and $Q_3$. On the right, $C_3$ with the tetrahedra $P_4$ and $Q_4$.}\end{figure}
While $P_k$ is the polytope we'll use as $F$ in Lemma \ref{face}, $Q_k$ is a simplex which sits inside of $P_k$ and has a more tractable facet description. We will show that  $C_{k-1}(\frac{k-1}{2k-1}\pi +\epsilon)$ lies in $Q_k$ in order to show that it lies in $P_k$. We'll often need the trigonometric identities stated in Section \ref{trig}.

To see that $Q_k \subseteq P_k$, note that their vertex sets differ by only one element. It suffices to write $Q_k$'s extra vertex, $(0,\hdots, 0)=C_{k-1}(\frac{\pi}{2})$, as a convex combination of the vertices of $P_k$. 
By Trig. Identity \ref{trig sum 2}, we have that for each $l=1, \hdots, k-1$, $0=1/2 + \sum_{j=1}^{k-1} \cos((2l-1)\theta_j)).$ Putting these together gives that $C_{k-1}(\frac{\pi}{2})=(0,\hdots,0) =\frac{2}{2k-1} (\frac{1}{2}C_{k-1}(0\pi) + \sum_{j=1}^{k-1}C_{k-1}(\theta_j))$. So indeed $Q_k\subset P_k$.

\begin{lemma} The curve $C_{k-1}$ meets each facet of $Q_k$ transversely and $C_{k-1}(\theta)$ lies in the interior of $Q_k \subset P_k$ for
$\theta  \in  
\left\{ \begin{array}{rl}
(\frac{(k-1)\pi}{2k-1},\frac{\pi}{2}) & \text{if } k \text{ is odd}\\
(\frac{\pi}{2},\frac{k\pi}{2k-1}) & \text{if } k \text{ is even.}\\
\end{array}\right .$
\label{Qk}
\end{lemma}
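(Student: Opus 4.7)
My plan is to write $C_{k-1}(\theta)$ in explicit barycentric coordinates with respect to $Q_k$ and read off both claims of the lemma from the resulting sine--cosine expressions. Set $v_j = C_{k-1}(\theta_j)$, so $v_0 = 0$, and let $\lambda_j(\theta)$ denote the barycentric coordinates of $C_{k-1}(\theta)$ in the $v_j$'s; the coordinates $\lambda_1,\ldots,\lambda_{k-1}$ are determined by the $(k-1)\times(k-1)$ linear system with matrix $M_{lj} = \cos((2l-1)\theta_j)$, and then $\lambda_0 = 1 - \sum_{j\ge 1}\lambda_j$.

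The first computation is to invert $M$. Combining the real-DFT orthogonality $\sum_{j=0}^{2k-2}\cos((2l-1)\phi_j)\cos((2l'-1)\phi_j) = \tfrac{2k-1}{2}\,\delta_{ll'}$ (where $\phi_j = 2j\pi/(2k-1)$), the symmetry $j \leftrightarrow 2k-1-j$ on these angles, and Trig.~Identity~\ref{trig sum 2} gives $MM^{\top} = \tfrac{2k-1}{4}I - \tfrac{1}{2}J$; this is invertible by Sherman--Morrison, which also shows $Q_k$ is genuinely a simplex. Applying $M^{-1} = M^{\top}(MM^{\top})^{-1}$, using $S(\theta) := \sum_{l=1}^{k-1}\cos((2l-1)\theta) = \sin(2(k-1)\theta)/(2\sin\theta)$, and simplifying via product-to-sum identities then yields
\[\lambda_j(\theta) = \frac{2\sin^2\theta_j\,\cos\theta\,\sin((2k-1)\theta)}{(2k-1)\sin\theta\,(\sin^2\theta - \sin^2\theta_j)}\ \ (j\ge 1),\qquad \lambda_0(\theta) = 1 + 2S(\theta) = \frac{2\sin\bigl(\tfrac{(2k-1)\theta}{2}\bigr)\cos\bigl(\tfrac{(2k-3)\theta}{2}\bigr)}{\sin\theta},\]
where the last equality uses $\sin\theta + \sin(2(k-1)\theta) = 2\sin((2k-1)\theta/2)\cos((2k-3)\theta/2)$.

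For the interior-membership claim, I check $\lambda_j(\theta) > 0$ for all $j$ on the stated interval $I$. The endpoints of $I$ are the two elements of $\{\theta_0,\theta_1,\ldots,\theta_{k-1}\}$ closest to $\pi/2$, so $\sin\theta > \sin\theta_j$ throughout $I$ for every $j\ge 1$, which gives $\sin^2\theta - \sin^2\theta_j > 0$. A brief zero-count places all zeros of $\sin((2k-1)\theta)$, $\sin((2k-1)\theta/2)$, and $\cos((2k-3)\theta/2)$ outside the open interval $I$, so the remaining sign factors are constant on $I$; evaluating at $\theta = \pi/2$ gives $\cos\theta\,\sin((2k-1)\theta) > 0$ in both parity cases (using $\sin((2k-1)\pi/2) = (-1)^{k+1}$ and the matching sign of $\cos\theta$ just past $\pi/2$) and $\sin((2k-1)\pi/4)\cos((2k-3)\pi/4) = 1/2 > 0$. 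Hence every $\lambda_j > 0$ on $I$, so $C_{k-1}(\theta) \in \intr(Q_k)$.

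For transversality, observe that (up to positive scaling) $\lambda_m$ is the affine functional defining the facet opposite $v_m$, so the curve is transverse to this facet at a point $\theta_*$ iff $\lambda_m(\theta_*) = 0$ is a simple root. From the closed forms, the zeros of $\lambda_m$ for $m\ge 1$ come from simple zeros of $\cos\theta$ or $\sin((2k-1)\theta)$, and the corresponding curve points are $v_0$ (at $\theta = \pi/2$), a vertex $v_j$ (at $\theta = \theta_j$), or a point of the form $-v_j$ (at the remaining $\theta = l\pi/(2k-1)$ with $l$ odd). Using the linear independence of $v_1,\ldots,v_{k-1}$ together with $v_0 = 0$, each $-v_j$ has $Q_k$-barycentric coordinate $\lambda_j = -1$, so $-v_j \notin Q_k$; hence the curve meets the facet opposite $v_m$ precisely at its vertices $\{v_j : j \ne m\}$ and transversely there. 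The factored form of $\lambda_0$ handles the facet opposite $v_0$ identically. The main technical obstacle is the algebraic simplification that turns $M^{-1}$ applied to the cosine vector into the above product forms; once the identity $MM^{\top} = \tfrac{2k-1}{4}I - \tfrac{1}{2}J$ and the standard sine--cosine summation formulas are available, the remaining sign analysis and transversality check reduce to routine zero-counting.
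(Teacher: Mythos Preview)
Your argument is correct. In fact your barycentric coordinates $\lambda_j$ are (up to positive scalars) exactly the paper's facet functionals $h_{j,k}$ composed with $C_{k-1}$: your $\lambda_0 = 1 + 2S(\theta)$ equals $2f_{0,k}(\theta)$, and each $\lambda_j$ for $j\ge 1$ is a positive multiple of $f_{j,k}$. So the two proofs analyze the same $k$ trigonometric polynomials of degree $2k-3$ in $\cos\theta$.

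The difference is in how the formulas are obtained and presented. The paper writes down the $h_{j,k}$ by inspection, verifies they cut out $Q_k$ by evaluating at the vertices (Trig.\ Identities~\ref{trig sum}--\ref{trig prod sum}), and then locates all $2k-3$ roots of each $f_{j,k}$ one family at a time. You instead invert the vertex matrix via $MM^{\top}=\tfrac{2k-1}{4}I-\tfrac12 J$ and Sherman--Morrison, arriving at closed product forms
\[
\lambda_j(\theta)\ \propto\ \frac{\cos\theta\,\sin((2k-1)\theta)}{\sin\theta\,(\cos^2\theta_j-\cos^2\theta)},\qquad
\lambda_0(\theta)\ \propto\ \frac{\sin\!\bigl(\tfrac{(2k-1)\theta}{2}\bigr)\cos\!\bigl(\tfrac{(2k-3)\theta}{2}\bigr)}{\sin\theta},
\]
from which both the simplicity of the roots and the sign on the required interval can be read off directly (your $\sin^2\theta>\sin^2\theta_j$ observation is exactly the statement that the endpoints of $I$ are the two $\theta_i$ closest to $\pi/2$). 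The trade-off: the paper's approach needs no matrix algebra but has to guess and verify the remaining roots of each $f_{j,k}$; your approach front-loads a heavier algebraic simplification (the step from $M^{-1}$ to the displayed product forms is the real work, as you note) but then the zero-count and sign check are immediate. One small wording issue: you write ``evaluating at $\theta=\pi/2$'' for the sign of $\cos\theta\,\sin((2k-1)\theta)$, but $\cos(\pi/2)=0$; what you mean (and what the argument actually uses) is the sign on the appropriate side of $\pi/2$, which is fine.
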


\begin{proof} The plan is to find a halfspace description of $Q_k$, find the places where $C_{k-1}$ crosses the boundary of each of these halfspaces, and deduce from this that $C_{k-1}(\theta)$ lies in each of these halfspaces for the appropriate $\theta$.  

First we find the facet description of $Q_k$.
For $k\in \N$, and $j\in \{0, \hdots, k-1\}$, define the affine linear functions $h_{j,k}:\R^{k-1} \rightarrow \R$ as 
\begin{align*} h_{0,k}(x) &= 1/2+ \sum_{l=1}^{k-1}x_l, \;\;\;\;\;\text{and}  \\ 
h_{j,k}(x) &= \sum_{l=1}^{k-1}\left(\cos((2l-1)\theta_j)-1\right)\; x_l \;\;\;\;\;\text{ for }j=1, \hdots, k-1.\end{align*}
We will see that $Q_k = \{x \in \R^{k-1}\;:\; h_{j,k}(x)\geq 0 \;\text{ for all } j=0, \hdots, k-1\}$. 
Note that each $h_{j,k}$ gives a trigonometric polynomial by composition with $C_{k-1}$. For each $j=0, \hdots, k-1$, define $f_{j,k}:[0,2\pi]\rightarrow \R$ by \[f_{j,k}(\theta) := h_{j,k}(C_{k-1}(\theta)).\] 
To see that the $h_{j,k}$ give a facet description of $Q_k$ we will show that for each $j=0, \hdots, k-1$, we have $f_{j,k}(\theta_j)>0$ and $f_{j,k}(\theta_i)=0$ for all $i \neq j$. 
By Trig. Identity \ref{trig sum} in Section \ref{trig}, \[f_{0,k}(\theta_j) \;=\;\frac{1}{2}+ \sum_{l=1}^{k-1}\cos((2l-1)\theta_j) \;=\;0  \]
for $ j=1, \hdots, k-1$. Moreover $f_{0,k}(\theta_0) =f_{0,k}(\frac{\pi}{2}) =  1/2+\sum_{l=1}^{k-1} 0 >0.$\\$\;$\\
Now let $j \in\{1,\hdots, k-1\}$. Using Trig. Identities \ref{trig sum} and \ref{trig prod sum}, we see that for every $i \in \{1, \hdots, k-1\} \backslash \{j\}$,
\begin{align*}f_{j,k}(\theta_i) &= \sum_{l=1}^{k-1} \cos((2l-1)\theta_j)\cos((2l-1)\theta_i) -  \sum_{l=1}^{k-1}\cos((2l-1)\theta_i)\\
& = -\frac{1}{2} - (-\frac{1}{2}) \;=\;0. \end{align*}
Also, we have $f_{j,k}(\theta_0) =f_{j,k}(\frac{\pi}{2}) = h_{j,k}(\overline{0}) = 0$. Finally \begin{align*}f_{j,k}(\theta_j) &=  \sum_{l=1}^{k-1} \cos((2l-1)\theta_j)^2 -  \sum_{l=1}^{k-1}\cos((2l-1)\theta_j)\\
&=\sum_{l=1}^{k-1} \cos((2l-1)\theta_j)^2 +\frac{1}{2}\tag{by Trig. Identity \ref{trig sum}} \\
& > 0. \end{align*}
So indeed $Q_k = \{x\in \R^{k-1} \;:\; h_{j,k}(x)\geq 0 \;\text{ for all } j=0, \hdots, k-1\}$.

To prove Lemma \ref{Qk}, it suffices to show that all roots of $f_{j,k}$ have multiplicity one and $f_{j,k}(\theta) >0$ for the specified $\theta$.  We start by finding all roots of  $f_{j,k}(\theta)$ in $[0,\pi]$. 

\begin{remark}As $C_d$ is an algebraic curve of degree $2d-1$ in $\cos(\theta)$, it meets any hyperplane in at most $2d-1$ points (counted with multiplicity). 
\end{remark}
Thus for each $j$, $f_{j,k}$ has at most  $2k-3$ roots in $[0,\pi]$. We have already found $k-1$ roots of each, namely $\{\theta_0, \hdots, \theta_{k-1}\}\backslash \{\theta_j\}$. Now we find the remaining $k-2$.\\ $\;$\\
\textbf{(j=0).} Note that $\cos(\pi -\theta)  = -\cos(\theta)$. Then by Trig. Identity \ref{trig sum}, for $i=1, \hdots, k-2$,  
\begin{align*} f_{0,k}\left(\frac{2i-1}{2k-3}\pi\right) 
=\;\;&\sum_{l=1}^{k-1}\cos\left(\frac{(2l-1)(2i-1)}{2k-3}\pi\right)+ \frac{1}{2}\\ 
=-1+&\sum_{l=1}^{k-2}\cos\left(\frac{(2l-1)(2i-1)}{2k-3}\pi\right)+ \frac{1}{2}\; =\;-1 + \frac{1}{2}+ \frac{1}{2}\;=\;0.\end{align*}

Thus the roots of $f_{0,k}$ are $\{\theta_i \;:\; i=1, \hdots, k-1\} \cup \{\frac{(2i-1)\pi}{2k-3} \;:\; i=1, \hdots, k-2\}$. As there are $2k-3$ of them, we know that these are all the roots of $f_{0,k}$ and each occurs with multiplicity one. Furthermore, since
\[\frac{k-2}{2k-3}\;<\;\frac{k-1}{2k-1}\;<\;\frac{k}{2k-1}\;<\;\frac{k-1}{2k-3},\]
it follows that $f_{0,k}$ has no roots in the interval $(\frac{(k-1)\pi}{2k-1}, \frac{k\pi}{2k-1})$.   Thus the sign of $f_{0,k}$ is constant on $(\frac{(k-1)\pi}{2k-1}, \frac{k\pi}{2k-1})$. Since $f_{0,k}(\frac{\pi}{2}) >0$, we see that for all $\theta \in  (\frac{(k-1)\pi}{2k-1}, \frac{k\pi}{2k-1})$,  $f_{0,k}(\theta)=h_{0,k}(C_{k-1}(\theta))>0$.\\$\;$\\
\textbf{(j =1, $\hdots$, k-1).} Note that $f_{j,k}(\pi - \theta) = -f_{j,k}(\theta)$. We've already seen that $\theta_i = \frac{2i \pi}{2k-1}$ is a root of this function for $i \in \{1, \hdots, k-1\} \backslash \{j\}$, so for each such $i$, $\frac{(2k-1-2i)\pi}{2k-1}$ is also a root. Thus the $2k-3$ roots of $f_{j,k}( \theta)$  are \[\left\{\frac{\pi}{2}\right\} \cup\left\{\frac{i \pi}{2k-1} \;:\; i \in \{1, \hdots, 2k-2\}\backslash\{2j, 2k-1-2j\}\right\}.\]  

For each $j$ this gives that $f_{j,k}$ has $k-1$ roots of multiplicity one in $[0, \frac{(k-1)\pi}{2k-1}]$ and no roots in $(\frac{(k-1)\pi}{2k-1}, \frac{\pi}{2})$. Note that $f_{j,k}(0\pi) <0$. The sign of $f_{j,k}(\theta)$ changes at each of its roots, so  for $\theta \in (\frac{(k-1)\pi}{2k-1}, \frac{\pi}{2})$, we have that $(-1)^{k-1}f_{j,k}(\theta) > 0$. By symmetry of $ f_{j,k}(\theta)$ over $\pi /2$, we see that for $\theta \in (\frac{\pi}{2}, \frac{k\pi}{2k-1})$ we have $(-1)^{k}f_{j,k}(\theta) > 0$. 
\end{proof}

\begin{figure}[h]\begin{center}\includegraphics[scale=0.7]{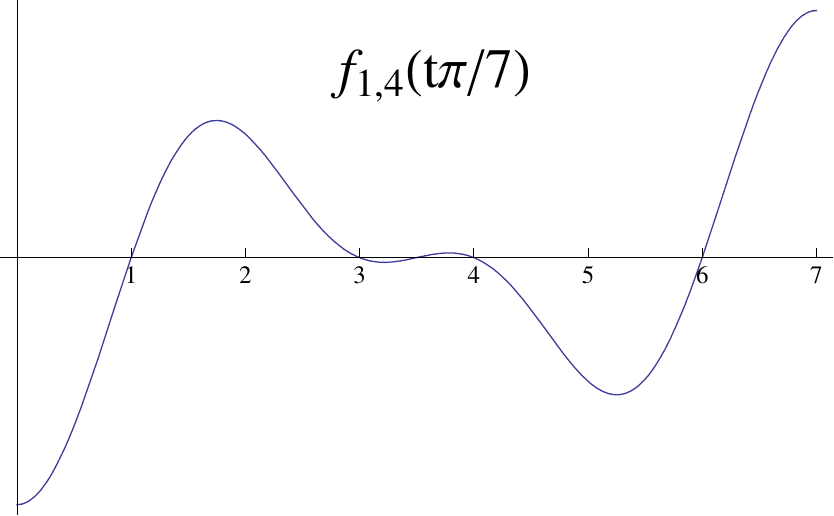}$\;\;\;\;\;\;\;\;\;\;\;\;\;\;\;$ \includegraphics[scale=0.7]{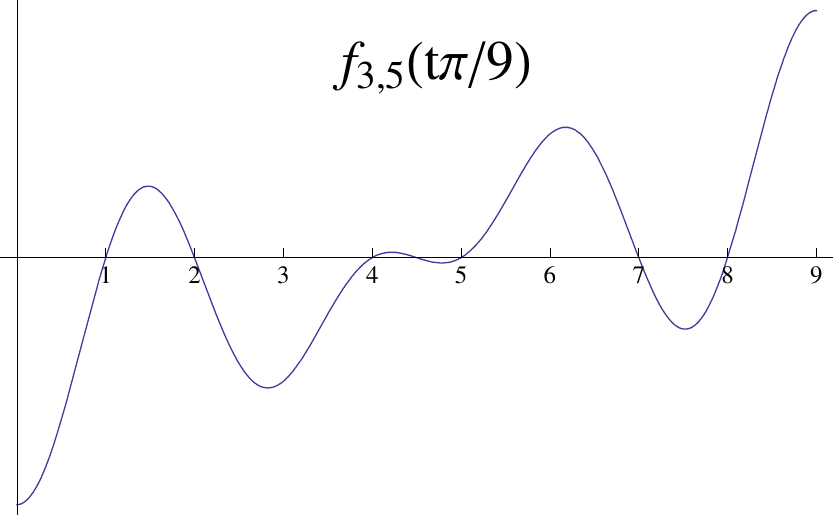}\end{center}\caption{Here are two examples of the graphs of $f_{j,k}(\theta)$. Note that $f_{j,k}(\frac{\pi}{2k-1}t)$ has roots $\{1,\hdots, 2k-1\} \backslash\{2j, 2k-1-2j\}$, all of multiplicity one.} \end{figure}

Now that we completely understand the facets of $Q_k$ and their intersection with the curve $C_{k-1}$, we can use the previous lemmata to prove our main theorem.

\section{Proof of Theorem \ref{edge}}
\begin{proof}
As discussed before, by \cite[Thm 1.1]{BN} and symmetry of the faces it suffices to show that  for arbitrarily small $\epsilon >0$ and $\theta = \frac{k-1}{2k-1}\pi+\epsilon$, $[SM_{2k}(-\theta), SM_{2k}(\theta)]$ is not an edge of $B_{2k}$. By Lemma \ref{cos}, we can do this by showing that $C_k(\frac{k-1}{2k-1}\pi+\epsilon)$ lies in the interior of $\conv(C_k)$. 

Note that $C_k(\frac{k-1}{2k-1}\pi+\epsilon)$ lies in the interior of $\conv(C_k)$ if and only if $C_k(\frac{k}{2k-1}\pi-\epsilon)$ lies in the interior of $\conv(C_k)$. 
As the value of $\cos((k-1)\pi)$ depends on the parity of $k$, we will use  $C_k(\frac{k-1}{2k-1}\pi+\epsilon)$ for odd $k$ and $C_k(\frac{k}{2k-1}\pi-\epsilon)$ for even $k$.
 
 We know that $\conv(C_k)$ has a face given by $x_{k}=1$. This intersects $C_k$ at the points $\{C_k(0\pi)\} \cup \{ C_k(\theta_j) \;:\; j=1,\hdots, k-1\}$.  Thus, the intersection of $\conv{C_k}$ with $\{x_k=1\}$ is $P_k$ as defined earlier sitting at height 1, and the projection of $C_k$ onto $\{x_k=1\}$ is $C_{k-1}$.\\$\;$\\ 
 \textbf{k odd.}  Since $k-1$ is even,  $C_k(\frac{k-1}{2k-1}\pi)$ lies on the face defined by $x_k =1$. Moreover, for small enough $\epsilon >0$, $C_{k-1}(\frac{k-1}{2k-1}\pi +\epsilon)$ is in the interior of $Q_k\subset P_k$ by Lemma~\ref{Qk}. As the curve $C_{k-1}$ meets the facets of $Q_k$ transversely at $C_{k-1}(\frac{k-1}{2k-1}\pi)$, it must meet the facets of $P_k$ transversely at this point as well (see Remark~\ref{LP}).   Lemma~\ref{face} then shows that $C_k(\frac{k-1}{2k-1}\pi + \epsilon)$ lies in the interior of $\conv(C_k)$ for small enough $\epsilon >0$.  \\$\;$\\ 
 \textbf{k even.}  Now  $k$ is even and $C_k(\frac{k}{2k-1}\pi)$ lies on the face defined by $x_k =1$. As before, for small enough $\epsilon >0$, $C_{k-1}(\frac{k}{2k-1}\pi -\epsilon)$ is in the interior of $P_k$ and $C_{k-1}$ meets the facets of $P_k$ transversely at $C_{k-1}(\frac{k}{2k-1}\pi)$. Thus $C_k(\frac{k}{2k-1}\pi - \epsilon)$ lies in the interior of $\conv(C_k)$ for small enough $\epsilon >0$.
 \end{proof}

We now know all the edges of $B_{2k}$.  This leaves the challenging open problem of understanding the higher dimensional faces of this convex body.

\section{Useful trigonometric identities}\label{trig}

\begin{trig}For any $k \in \N$ and $l\in\{1, \hdots, k-1\}$,  \[ \sum_{j=1}^{k-1} \cos\left(\frac{(2l-1)2j}{2k-1}\pi\right)=-\frac{1}{2}.\]\label{trig sum 2}
\end{trig} \begin{proof}
By \cite[Ex. 1.5.26]{R}, for $l=1, \hdots, k-1$, we have that $0 = 1+ \sum_{j=1}^{2k-2}\cos\left( \frac{(2l-1)j}{2k-1}\pi\right).$

As $-j\equiv 2k-1-j\mod 2k-1$ and $\cos(\theta)=\cos(-\theta)$, this gives
\begin{align*}0&=1+ \sum_{j=1}^{2k-2}\cos\left( \frac{(2l-1)j}{2k-1}2\pi\right) \\
&=1+\sum_{j=1}^{k-1}\left[\cos\left( \frac{(2l-1)j}{2k-1}2\pi\right) +\cos\left( \frac{(2l-1)(2k-1-j)}{2k-1}2\pi\right)\right]\\
&=1 +2\;\sum_{j=1}^{k-1}\cos\left( \frac{(2l-1)j}{2k-1}2\pi\right).
\end{align*}\end{proof}

\begin{trig} For any $k \in \N$ and $j\in\{1, \hdots, 2k-2\}$,  \[\sum_{l=1}^{k-1}\cos\left(\frac{(2l-1)2j}{2k-1}\pi\right)=-\frac{1}{2}. \]\label{trig sum}
\end{trig}
\begin{proof} 




By \cite[Ex. 1.5.26]{R}, we have that for $j=1, \hdots, 2k-2$, \[0= \sum_{l=1}^{2k-1}\cos\left(\frac{(2l-1)2j}{(2k-1)}\pi\right) = 1+ \sum_{l=1}^{2k-2}\cos\left(\frac{(2l-1)2j}{(2k-1)}\pi\right).\] 
From this, the claim follows by an argument similar to the proof of Trig. Identity~\ref{trig sum 2}. \end{proof}

\begin{trig} For any $k \in \N$ and $i\neq j \in  \{0, \hdots, k-1\}$,  \[\sum_{l=1}^{k-1}\cos\left(\frac{(2l-1)2i}{2k-1}\pi\right)\cos\left(\frac{(2l-1)2j}{2k-1}\pi\right)=-\frac{1}{2}.\] \label{trig prod sum}\end{trig}
\begin{proof} As $|i-j|,|i+j| \in \{1,\hdots, 2k-2\}$, this follows from Trig. Identity \ref{trig sum} and the identity $\cos(\alpha)\cos(\beta) = \frac{1}{2}\cos(\alpha+\beta)+\frac{1}{2}\cos(\alpha-\beta)$. \end{proof} 


\section*{Acknowledgements}Thanks to Ming Xiao Li and Raman Sanyal for many helpful discussions and to the reviewers for their careful reading. 
 The author was funded by the University of California - Berkeley Mentored Research Award and
NSF grant DMS-0757207.

\end{document}